\newtheorem{definition}{Definition}
\newtheorem{lemma}{Lemma}
\newtheorem{theorem}{Theorem}
\newtheorem{assumption}{Assumption}
\begin{document}
\title{\textbf{A Low-Power Hardware-Friendly Optimisation Algorithm With Absolute Numerical Stability and Convergence Guarantees}}

\author{
  Anis Hamadouche,\,\,
  Yun Wu,\,\,
  Andrew M.\ Wallace,\,
  and\,
  Jo\~ao F.\ C.\ Mota%
  \IEEEcompsocitemizethanks{
    \IEEEcompsocthanksitem
    Work supported by UK's EPSRC (EP/T026111/1, EP/S000631/1), and the MOD University Defence
    Research Collaboration. 
    \IEEEcompsocthanksitem
    Anis Hamadouche, Andrew M.\ Wallace, and Jo\~ao F.\ C.\ Mota
    are with the School of Engineering \& Physical Sciences, Heriot-Watt University, Edinburgh EH14 4AS,
    UK. (e-mail: \{ah225,y.wu,a.m.wallace,j.mota\}@hw.ac.uk). Yun Wu is with the School of Electronics, Electrical Engineering and Computer Science, Queen's University Belfast, Belfast BT7 1NN, Ireland. (e-mail: ywu12@qub.ac.uk).
  }
}

\maketitle
\thispagestyle{plain}
\pagestyle{plain}

\begin{abstract}
\textbf{We propose Dual-Feedback Generalized Proximal Gradient Descent (DFGPGD) as a new, hardware-friendly, operator splitting algorithm. We then establish convergence guarantees under approximate computational errors and we derive theoretical criteria for the numerical stability of DFGPGD based on absolute stability of dynamical systems. We also propose a new generalized proximal ADMM that can be used to instantiate most of existing proximal-based composite optimization solvers. We implement DFGPGD and ADMM on FPGA ZCU106 board and compare them in light of FPGA's timing as well as resource utilization and power efficiency. We also perform a full-stack, application-to-hardware, comparison between approximate versions of DFGPGD and ADMM based on dynamic power/error rate trade-off, which is a new hardware-application combined metric.}
\end{abstract}
\begin{IEEEkeywords}
\textbf{Optimization; Nonlinear Programming; DNLP; Approximate Computing; ADMM; Proximal Gradient Method; FPGA; Low-power.}
\end{IEEEkeywords}

\IEEEpeerreviewmaketitle
\newcommand\scalemath[2]{\scalebox{#1}{\mbox{\ensuremath{\displaystyle #2}}}}
\section{INTRODUCTION}
Consider a constrained penalized quadratic optimization problem 
\begin{equation}
 \label{Eq:LassoProblem0}
  \begin{split}
      &\underset{x \in \mathbb{R}^n,z \in \mathbb{R}^n}{\text{minimize}} \,\,\,
      f(x) := \|Hx-b\|_2^2 + h(z),\\
      &\text{subject to} \,\,\,
      Ax+Bz = c\,.  
  \end{split}
\end{equation}
where $H \in \mathbb{R}^{m\times n}$, $b \in \mathbb{R}^{m}$, $c \in \mathbb{R}^{p}$, $A\in \mathbb{R}^{p\times n}$ and $B \in \mathbb{R}^{p\times m}$ are fixed. 
This problem can be solved using a class of first-order algorithms known as the Alternating Direction Method of Multipliers (ADMM), which we introduce next.
\subsection{Weighted Lagrangian ADMM (WL-ADMM)}
The generalized WL-ADMM is formulated by taking the $L_k$-weighted Lagrangian of~\eqref{Eq:LassoProblem0}, where $L_k$ is a positive definite (PD) matrix:
\begin{subequations}
\label{WL-ADMM}
    \begin{alignat}{3}
    \label{WLADMM1}
    &x^{k+1} = \underset{x}{\arg\min}\quad g(x)+\frac{1}{2\lambda}\|Ax+Bz^k-c+v^k\|_{L_k }^2&\\
    \label{WLADMM2}
    &z^{k+1} = \underset{z}{\arg\min}\quad h(z)+\frac{1}{2\lambda}\|Ax^{k+1}+Bz-c+v^k\|_{L_k }^2&\\
    \label{WLADMM3}
    &v^{k+1} = v^k+(Ax^{k+1}+Bz^{k+1}-c).&
    \end{alignat}
\end{subequations}
We recover the classical ADMM scheme~\cite{boyd2011distributed} when $L_k$ is the identity matrix.
\subsection{Weighted Lagrangian M-Generalized Proximal ADMM (WLM-ADMM)}
The \textit{Weighted Lagrangian M-Generalized proximal ADMM} is obtained by adding a quadratic function in \eqref{WLADMM1} and \eqref{WLADMM2} penalizing the difference between the variable and its previous value, a term known as \textit{proximal penalty}. This yields
\begin{subequations}
\label{WLM-ADMM}
    \begin{alignat}{3}
    &x^{k+1} = \underset{x}{\arg\min}\quad g(x)+\frac{1}{2\lambda}\|Ax+Bz^k-c+v^k\|_{L_k }^2&\notag\\&+\frac{1}{2}\|x-x^k\|_{M_x^k}^2&\\
    &z^{k+1} = \underset{z}{\arg\min}\quad h(z)+\frac{1}{2\lambda}\|Ax^{k+1}+Bz-c+v^k\|_{L_k }^2&\notag\\&+\frac{1}{2}\|z-z^k\|_{M_z^k}^2&\\
    &v^{k+1} = v^k+(Ax^{k+1}+Bz^{k+1}-c),&
    \end{alignat}
\end{subequations}
where $M_x^k$ and $M_z^k$ are PD matrices that can vary from iteration to iteration. We recover~\eqref{WL-ADMM} when both matrices are set to zero. Note that in the case of convex function $f$, adding quadratic terms (proximal terms) makes the objective functions strongly convex, which improves the condition number of each subproblem, at the expense of yielding approximate solutions.  

Defining
\begin{subequations}
\label{WLM-ADMM1.2Vars}
    \begin{alignat}{3}
    & \Lambda_{1_k} = \frac{1}{\lambda} A^\top L_k  A + M_x^k&\\
    & \gamma_{1_k}(x^k,z^k,v^k) = M_x^k x^k-\frac{1}{\lambda}A^\top L_k  (Bz^k-c+v^k)&\\
    & \Lambda_{2_k} = \frac{1}{\lambda} B^\top L_k  B + M_z^k&\\
    & \gamma_{2_k}(x^{k+1},z^k,v^k) = M_z^k z^k-\frac{1}{\lambda}B^\top L_k  (Ax^{k+1}-c+v^k).&   
    \end{alignat}
\end{subequations}
we can write \eqref{WLM-ADMM} using Euclidean norms, as
\begin{subequations}
\label{WLM-ADMM1.1}
    \begin{alignat}{3}
    &x^{k+1} = \underset{x}{\arg\min}\quad g(x)+\frac{1}{2}\|\Lambda_{1_k}^{1/2}x -\Lambda_{1_k}^{-1/2}\gamma_{1_k}\|^2&\\
    &z^{k+1} = \underset{z}{\arg\min}\quad h(z)+\frac{1}{2}\|\Lambda_{2_k}^{1/2}z -\Lambda_{2_k}^{-1/2}\gamma_{2_k}\|^2&\\
    &v^{k+1} = v^k+(Ax^{k+1}+Bz^{k+1}-c).&
    \end{alignat}
\end{subequations}
Note that $\Lambda_{1_k}$ and $\Lambda_{2_k}$ are the sum of a positive semidefinite matrix and a positive definite one. Thus, they are always invertible for any A and B. In turn, \eqref{WLM-ADMM1.1} can be written with generalized norms, as
\begin{subequations}
\label{WLM-ADMMP1.2}
    \begin{alignat}{3}
    &x^{k+1} = \underset{x}{\arg\min}\quad g(x)+\frac{1}{2}\|x -\Lambda_{1_k}^{-1}\gamma_{1_k}\|_{\Lambda_{1_k}}^2&\\
    &z^{k+1} = \underset{z}{\arg\min}\quad h(z)+\frac{1}{2}\|z -\Lambda_{2_k}^{-1}\gamma_{2_k}\|_{\Lambda_{2_k}}^2&\\
    &v^{k+1} = v^k+(Ax^{k+1}+Bz^{k+1}-c).&
    \end{alignat}
\end{subequations}

In many hardware-based algorithms, matrix inversion can be very expensive and less stable in numerical calculation. Thus, in this work, we modify the WLM-ADMM algorithm to design a new algorithm that completely avoids the matrix inversion step. We propose Dual Feedback Generalized Proximal Gradient (DFGPGD) algorithm which is an instance of the Weighted-Lagrangian M-generalized-proximal ADMM (WLM-ADMM) but with a careful choice of the proximal matrix $M$.

\subsection{Motivation}
The WLM-ADMM instance for problem~\eqref{Eq:LassoProblem0} is given by

\begin{subequations}
    \begin{alignat}{3}
    &x^{k+1} =(H^\top H + \Lambda_{1_k})^{-1}(H^\top b+\gamma_{1_k})&\label{DFGPGD:x-update}\\
    &z^{k+1} = \underset{z}{\arg\min}\quad \gamma\|z\|_1+\frac{1}{2}\|z -\Lambda_{2_k}^{-1}\gamma_{2_k}\|_{\Lambda_{2_k}}^2&\\
    &v^{k+1} = v^k+(Ax^{k+1}+Bz^{k+1}-c).&
    \label{WLM-ADMM}
    \end{alignat}
\end{subequations}
where 
\begin{subequations}
    \begin{alignat}{3}
    & \Lambda_{1_k} = \frac{1}{\lambda} A^\top L  A + M_x&\\
    & \gamma_{1_k}(x^k,z^k,v^k) = \Lambda_{1_k} x^k-\frac{1}{\lambda}A^\top L  (Ax^k+Bz^k-c+v^k)&\\
    & \Lambda_{2_k} = \frac{1}{\lambda} B^\top L  B + M_z&\\
    & \gamma_{2_k}(x^{k+1},z^k,v^k) = \Lambda_{2_k} z^k-\frac{1}{\lambda}B^\top L  (Ax^{k+1}+Bz^k-c+v^k).&
    \end{alignat}
\end{subequations}
Note that $H^\top H + \Lambda_{1_k}$ is invertible since it is a sum of a positive semi-definite matrix, $H^\top H$, and a positive definite matrix,  $\Lambda_{1_k}$ (by assumption). Let us consider a scenario where the condition number, which is given by the ratio of the maximum and minimum eigenvalues 
\begin{equation}
    \rho = \frac{\lambda_{\max}(H^\top H + \Lambda_{1_k})}{\lambda_{\min}(H^\top H + \Lambda_{1_k})}
\end{equation}
  is very large so that the numerical computation of the inverse is difficult and expensive to obtain (which is typically the case in high-dimensional problems). We call such a case \textit{ill-conditioned}. 
 
\subsection{Our approach}  
For ill-conditioned problems, we can leverage the degrees of freedom of WLM-ADMM to design $L$ and/or $M_x$ to optimize for the numerical stability of the inverse in the $x$-update ~\eqref{DFGPGD:x-update}. For instance, if we choose $M_x$ as  
\begin{equation}
    M_x = \lambda_xI - H^\top H - \frac{1}{\lambda}A^\top L A,
\end{equation}
then we completely eliminate the inverse operation in~\eqref{DFGPGD:x-update} and we obtain the following, inverse-free, instance of the WLM-ADMM algorithm 
\begin{subequations}
\label{DFGPGD1}
    \begin{alignat}{3}
    & u^{k+1} = Ax^k+Bz^k-c+v^k,&\\
    & x^{k+1} = x^k-\frac{1}{\lambda_x}(H^\top H x^k -H^\top b)-\frac{1}{\lambda_x\lambda}A^\top L u^{k+1},&\label{DFGPGD1:x-update}\\
    & z^{k+1} = \underset{z}{\arg\min}\quad h(z)+\frac{1}{2}\|z -\Lambda_{2_k}^{-1}\gamma_{2_k}\|_{\Lambda_{2_k}}^2&\label{DFGPGD1:z-update}\\
    & v^{k+1} = v^k+(Ax^{k+1}+Bz^{k+1}-c).&\label{DFGPGD1:v-update}
    \end{alignat}
\end{subequations}
Although algorithm~\eqref{DFGPGD1} and ADMM share the same $z$ and $v$ updates, the $x$-update in~\eqref{DFGPGD1} is more computationally efficient. In contrast to ADMM, Algorithm~\eqref{DFGPGD1} has a $O(n^2)$ iteration complexity as it does not involve any matrix inversion step. Specifically, the $x$-update step in~\eqref{DFGPGD1} only requires matrix-matrix and matrix-vector basic addition and multiplications with a total of $6n + 4n^2+$ flops assuming $H^\top H$, $H^\top b$ and $A^\top L$ are pre-calculated. On the other hand, ADMM has a more computationally expensive $x$-update with $n^3+4n^2+5n$ flops, and therefore, has an iteration complexity of $O(n^3)$.
\subsection{Applications}
Many structured machine learning and AI problems can be formulated as composite optimization problems. First-order algorithms are best aligned with the constrained computational resources and timing requirements of edge applications. Hardware-friendly algorithms are in demand for more efficient data processing in battery-powered applications such as edge-AI, also referred to as "embedded intelligence", and resource-constrained applications that make use of hardware-based solvers such as ADMM and proximal gradient in computer vision, compressed sensing and control. In this paper, we use a synthetic LASSO problem for evaluation and comparison. LASSO is a paradigm that is typically encountered in a variety of real-world problems such as: sparse image reconstruction and model predictive control (MPC) with sparse control signals.   
\subsection{Contributions}
We propose and analyse the convergence and numerical stability of a new type of operator splitting algorithm, Dual-Feedback Generalized Proximal Gradient Descent (DFGPGD),  under computational inaccuracies due to inexact updates. We propose a dynamical system state space description for DFGPGD that we use to derive absolute numerical stability criteria. We implement DFGPGD and ADMM on FPGA ZCU106 board and perform a comparative study in light of FPGA's timing, resource utilization and power efficiency criteria. We propose a new combined hardware-application metric, i.e., "power/error rate trade-off" that we use to perform a comparative study between approximate versions of DFGPGD and ADMM.
\subsection{Organization}
In the following section, we introduce a general form of DFGPGD for composite convex problems. In Section~\ref{Section:Convergence analysis}, we establish convergence bounds for DFGPGD under approximate proximal computations. We also derive theoretical criteria for the absolute numerical stability of DFGPGD in the same section. In Section~\ref{Section:Experimental results}, we present our experimental results. We conclude the paper by a summary and possible future directions in Section~\ref{Section:Conclusions}

\section{Dual Feedback Generalized Proximal Gradient Descent (DFGPGD)}

Although by using Algorithm~\eqref{DFGPGD1} we completely avoid the computation of the inverse, this comes at the expense of an additional update step $u^{k+1}$. However, by inspecting the $u^{k+1}$ and $v^{k+1}$ updates, we can see that the same terms are involved in the computations except with delayed $x^k$ and $z^k$ updates. Therefore, we can use extra memory to cache the term $Ax^k+Bz^k-c$ (together with $v^k$) at iteration $k$ to be used in the next iteration $u^{k+1}$, which is then fed back to the next $x^{k+1}$ iteration. $H^\top H$, $H^\top b$ as well as $\frac{1}{\lambda_x\lambda}A^\top L$ can also be pre-computed and cached for efficiency. 

Note that the term $(H^\top H x^k -H^\top b)$ is the gradient of the differentiable term, i.e., $\frac{1}{2}\|H x -b\|_2^2$ evaluated at $x^k$. 

Let us now derive a more general form of DFGPGD. Consider a constrained problem
\begin{equation}
 \label{Eq:ProblemDFGPGD}
  \begin{split}
      &\underset{x \in \mathbb{R}^n,z \in \mathbb{R}^n}{\text{minimize}} \,\,\,
      f(x) := g(x) + h(z),\\
      &\text{subject to} \,\,\,
      Ax+Bz = c\,.  
  \end{split}
\end{equation}
where $g$ is convex and differentiable function, $h$ is convex and possibly nondifferentiable, $c \in \mathbb{R}^{p}$, $A \in \mathbb{R}^{p\times n}$ and $B \in \mathbb{R}^{p\times m}$.  

The general form of Dual-Feedback Generalized Proximal Gradient Descent (DFGPGD) algorithm can be written as
\begin{subequations}
\label{iwlm_admm}
    \begin{alignat}{3}
    & u^{k+1} = Ax^k+Bz^k-c+v^k,&\\
    & x^{k+1} = x^k-\frac{1}{\lambda_x}\nabla g(x^k)-\frac{1}{\lambda_x\lambda}A^\top L u^{k+1},&\\
    & z^{k+1} = \underset{z}{\arg\min}\quad h(z)+\frac{1}{2}\|z -\Lambda_{2_k}^{-1}\gamma_{2_k}\|_{\Lambda_{2_k}}^2&\\
    & v^{k+1} = v^k+(Ax^{k+1}+Bz^{k+1}-c),&
    \end{alignat}
which is just an instance of WLM-ADMM~\eqref{WLM-ADMM} with $M_x$ chosen as
\begin{equation}
    M_x = \lambda_xI - \mathbf{H}_g - \frac{1}{\lambda}A^\top L A,
\end{equation}
where $\mathbf{H}_g$ is the hessian\footnote{$\mathbf{H}_g = H^\top H$ when $g(x) = \frac{1}{2}\|Hx-b\|_2^2$.} of $g$ and we require $\lambda_x > \|\mathbf{H}_g\|_2$ for $\Lambda_{1_k}$ to be positive definite (i.e., for the generalized proximal $\operatorname{prox}_{g}$ to be well-defined).
\end{subequations}

\section{Main Results}
\label{Section:Convergence analysis}
Since DFGPGD is an instance of the WLM-ADMM, we can use the same analytical line of proof of~\cite{hamadouche2022probabilistic} to establish convergence guarantees. We also consider here inexact proximal computations with bounded errors and we derive generic deterministic upper bounds. In order to guarantee numerical stability of DFGPGD, we propose a new line of proof that is based on the absolute stability of Lure systems in control. After re-writing the algorithm in state space description and identifying the structural dynamics and feedback nonlinearities, we use Kalman-Yakubovich-Popov lemma to establish sector criteria on the subgradient operators for the numerical stability (we call it \textit{Absolute numerical stability}) and convegence of DFGPGD.  

The main assumptions used during this analysis are stated next.
\subsection{Assumptions}
Let us list the assumptions about problem~\eqref{Eq:LassoProblem0} and proximal errors. Note that some assumptions may only apply in specific cases.
\begin{assumption}[Assumptions on the problem] We consider problem~\eqref{Eq:ProblemDFGPGD} with the following assumptions:
	\label{Assum:Problem}
	\hfill    
	\medskip  
	\noindent
\begin{enumerate}[label=\textbf{P.\arabic*}]
\item The function $g:\mathbb{R}^n \to \mathbb{R} \cup \{+\infty\}$ is differentiable, closed, proper, and convex.

\item The function $h:\mathbb{R}^n \to \mathbb{R} \cup \{+\infty\}$ is
lower-$\mathcal{C}^2$, possibly non-differentiable function.

\item $L \in \mathbb{R}^{p\times p}$, $\Lambda_{1}:=\frac{1}{\lambda} A^\top L  A + M_x \in \mathbb{R}^{n\times n}$ and $\Lambda_{2}:=\frac{1}{\lambda} B^\top L  B + M_z \in \mathbb{R}^{m\times m}$ are positive definite matrices. 
\end{enumerate}
\end{assumption}
\begin{assumption}[Error models] We consider the application of DFGPGD~\eqref{DFGPGD1} to~\eqref{Eq:ProblemDFGPGD} with the following assumptions:
	\label{Assum:ErrModels}
	\hfill    
	\medskip  
	\noindent
\begin{enumerate}[label=\textbf{M.\arabic*}]
\item The error sequences $r_x^k \in \mathbb{R}^n$ and $r_z^k \in \mathbb{R}^n$ in \eqref{DFGPGD1:x-update} and \eqref{DFGPGD1:z-update}, due to proximal errors $\varepsilon_{g}$ and $\varepsilon_{h}$, are deterministic and additive:
\begin{align}
    x^{k} := \overline{x}^{k}+r_x^k  \\
    z^{k} := \overline{z}^{k}+r_z^k, 
\end{align}
where $\overline{x}^{k}$ and $\overline{z}^{k}$ are the exact error-free iterates, that is, the evaluation of~\eqref{DFGPGD1:x-update} and~\eqref{DFGPGD1:x-update} with $\epsilon_g = \epsilon_h = 0$. We define $r_x^0 = r_z^0 = 0$. 
\item The error sequences $r_{x_\Omega}^k \in \mathbb{R}^n$ and $r_{z_\Omega}^k \in \mathbb{R}^n$ in \eqref{DFGPGD1:x-update} and \eqref{DFGPGD1:x-update}, due to random proximal errors $\varepsilon_{g_\Omega}^k$ and $\varepsilon_{h_\Omega}^k$, are probabilistic and additive: 
\begin{align}
    x_{\Omega}^{k} := \overline{x}^{k} + r_{x_\Omega}^k \\
    z_{\Omega}^{k} := \overline{z}^{k} + r_{z_\Omega}^k,
\end{align}
where $\overline{x}^{k}$ and $\overline{z}^{k}$ are the exact error-free iterates. We define $r_{x_\Omega}^0 = r_{z_\Omega}^0 = 0$. The $\Omega$ subscript refers to underlying sample probability space.
\item
For $k\geq 1$, the errors $\epsilon_{g_\Omega}^k$ and $\epsilon_{h_\Omega}^k$ are bounded almost surely. Specifically, for some $\varepsilon_0 > 0$, with probability $1$, we have that
\begin{subequations}
    \begin{alignat}{3}
        &0 \leq \epsilon_{g_\Omega}^k \leq \varepsilon_0&\\
        &0 \leq \epsilon_{h_\Omega}^k \leq \varepsilon_0.&
    \end{alignat}
\end{subequations}  
\item
$\epsilon_{g_\Omega}$ and $\epsilon_{h_\Omega}$ are stationary. Specifically, for all $k$, we have 
\begin{subequations}
    \begin{alignat}{3}
    &\mathbb{E}\bigg[\epsilon_{g_\Omega}^k\bigg] = \mathbb{E}\bigg[\epsilon_{g_\Omega}\bigg]=\text{const.}&\\
    &\mathbb{E}\bigg[\epsilon_{h_\Omega}^k\bigg]=\mathbb{E}\bigg[\epsilon_{g_\Omega}\bigg]=\text{const.}&
    \end{alignat}
\end{subequations}  
\end{enumerate}
\end{assumption}
\subsection{Analytical approach}
Here we use the analytical approach to derive an upper bound on the suboptimality in the function values under proximal errors. The objective is to establish mathematical assertions, in terms of initial conditions of the problem and algorithm's parameters, to test the convergence of DFGPGD proximal computation inaccuracies.
\begin{lemma}
Assume \textbf{P.1}, \textbf{P.2}, \textbf{P.3}, \textbf{M.1} and define the auxiliary sequence $u_2^{k+1} = v^{k+1}+B(z^k-z^{k+1})$. Then for any $x$ and $z$ such that $Ax + Bz = c$,  the sequence generated using the DFGPGD scheme \eqref{DFGPGD1} with  $\lambda_x > \|\mathbf{H}_g\|_2$, where $\mathbf{H}_g$ is the hessian of the differentiable function $g$, and a fixed PSD matrix $L$  satisfies 
\begin{align}
\label{Thm:ConvDFGPGD}
    &\frac{1}{k+1}\sum_{i=0}^{k}f(x^{i+1},z^{i+1})-f(x,z)+\sum_{i=0}^{k}\langle\frac{1}{\lambda}L  u^{i+1},Ax^{i+1}\notag\\&+Bz^{i+1}-(Ax+Bz)\rangle\leq\frac{1}{2(k+1)}\bigg[\lambda_x\|x^0-x\|^2_{}\notag\\&+\|x^{k+1}-x\|^2_{H^\top H}+\frac{1}{\lambda}\|x^{k+1}-x\|^2_{A^\top L A }+\|z^0-z\|^2\bigg]\notag\\
    &+\frac{1}{k+1}\bigg[\sum_{i=0}^{k}\varepsilon_g^{i+1}+\sum_{i=0}^{k}\varepsilon_h^{i+1}
    -\langle  M_x r_x^{k+1},x^{k+1}-x \rangle
    \notag\\&-\langle r_z^{k+1},z^{k+1}-z \rangle\bigg].
\end{align}
\end{lemma}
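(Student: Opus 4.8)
The plan is to exploit that DFGPGD~\eqref{DFGPGD1} is, by construction, the instance of WLM-ADMM~\eqref{WLM-ADMM} obtained with $M_x=\lambda_x I-\mathbf{H}_g-\tfrac1\lambda A^\top L A$, so that the gradient-type $x$-step coincides with the exact stationarity condition of the WLM-ADMM $x$-subproblem. I would first record the (inexact) first-order conditions of the two subproblems. Since $g$ is differentiable (P.1), substituting $\nabla g(x^{k})=\nabla g(x^{k+1})-\mathbf{H}_g(x^{k+1}-x^{k})$ and $\lambda_x I=M_x+\mathbf{H}_g+\tfrac1\lambda A^\top L A$ into the $x$-update shows that the computed $x^{k+1}$ satisfies $\nabla g(x^{k+1})+\Lambda_1(x^{k+1}-x^k)+\tfrac1\lambda A^\top L\,u^{k+1}=\eta_x^{k+1}$, where $\Lambda_1=\tfrac1\lambda A^\top L A+M_x=\lambda_x I-\mathbf{H}_g$, $u^{k+1}=Ax^k+Bz^k-c+v^k$, and $\eta_x^{k+1}$ absorbs the inexact-computation suboptimality $\varepsilon_g^{k+1}$ together with the additive perturbation $r_x^{k+1}$ of M.1. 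Since $h$ is lower-$\mathcal{C}^2$ (P.2) and its prox is solved to suboptimality $\varepsilon_h^{k+1}$, there is $\xi_z^{k+1}\in\partial_{\varepsilon_h^{k+1}} h(z^{k+1})$ (the $\varepsilon_h^{k+1}$-subdifferential, reflecting inexact minimization of the $z$-subproblem) with $\xi_z^{k+1}+\Lambda_{2_k}(z^{k+1}-z^k)+\tfrac1\lambda B^\top L\,u_2^{k+1}=\eta_z^{k+1}$, where the auxiliary sequence $u_2^{k+1}=v^{k+1}+B(z^k-z^{k+1})=Ax^{k+1}+Bz^k-c+v^k$ is precisely the quantity that repairs the $v^k\!\to\!v^{k+1}$ mismatch created because the $z$-step still uses $v^k$.

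Next I invoke convexity at an arbitrary feasible reference $(x,z)$ with $Ax+Bz=c$: $g(x^{i+1})-g(x)\le\langle\nabla g(x^{i+1}),x^{i+1}-x\rangle$ and $h(z^{i+1})-h(z)\le\langle\xi_z^{i+1},z^{i+1}-z\rangle+\varepsilon_h^{i+1}$, and substitute the optimality conditions. Adding the two lines yields $f(x^{i+1},z^{i+1})-f(x,z)$ on the left, the quadratic cross terms $-\langle\Lambda_1(x^{i+1}-x^i),x^{i+1}-x\rangle$ and $-\langle\Lambda_{2_i}(z^{i+1}-z^i),z^{i+1}-z\rangle$, the coupling terms $-\langle\tfrac1\lambda L u^{i+1},A(x^{i+1}-x)\rangle-\langle\tfrac1\lambda L u_2^{i+1},B(z^{i+1}-z)\rangle$, and the error terms $\langle\eta_x^{i+1},x^{i+1}-x\rangle+\langle\eta_z^{i+1},z^{i+1}-z\rangle+\varepsilon_h^{i+1}$. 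Using $u_2^{i+1}-u^{i+1}=A(x^{i+1}-x^i)$, splitting $\Lambda_1=\tfrac1\lambda A^\top L A+M_x$, and using the dual recursion $v^{i+1}=v^i+(Ax^{i+1}+Bz^{i+1}-c)$ together with $Ax+Bz=c$, the $\tfrac1\lambda A^\top L A$ part of the $x$-cross term, the two coupling terms, and the squares produced by the dual update recombine into the dual inner-product $\langle\tfrac1\lambda L u^{i+1},Ax^{i+1}+Bz^{i+1}-(Ax+Bz)\rangle$ carried to the left of~\eqref{Thm:ConvDFGPGD}, while only the $M_x$ part stays attached to the $x$ iterates.

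For the remaining quadratic cross terms I apply the three-point identity $\langle W(a-b),a-c\rangle=\tfrac12\|a-b\|_W^2+\tfrac12\|a-c\|_W^2-\tfrac12\|b-c\|_W^2$ (valid for any symmetric $W$) with $(a,b,c)=(x^{i+1},x^i,x)$ and weight $M_x$, and likewise for the $z$- and $v$-iterates; summing over $i=0,\dots,k$ telescopes the distance-to-reference terms, leaving the initial-distance terms $\tfrac{\lambda_x}{2}\|x^0-x\|^2$ and $\tfrac12\|z^0-z\|^2$ (here $r_x^0=r_z^0=0$ by M.1), the final-iterate quadratics $\|x^{k+1}-x\|^2_{H^\top H}$ and $\tfrac1\lambda\|x^{k+1}-x\|^2_{A^\top L A}$ (produced partly from the $\Lambda_1$-telescoping via $\lambda_x I=M_x+\mathbf{H}_g+\tfrac1\lambda A^\top L A$ and partly from the dual-update squares), and the boundary residual terms $-\langle M_x r_x^{k+1},x^{k+1}-x\rangle$ and $-\langle r_z^{k+1},z^{k+1}-z\rangle$. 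The leftover squared increments $-\tfrac12\|x^{i+1}-x^i\|_{\Lambda_1}^2$, $-\tfrac12\|z^{i+1}-z^i\|_{\Lambda_{2_i}}^2$ and the primal-feasibility squares coming from the dual step are nonpositive by $\Lambda_1\succ0$ (i.e.\ $\lambda_x>\|\mathbf{H}_g\|_2$), $\Lambda_{2_i}\succ0$ (P.3) and $L\succeq0$, and are dropped; the $\varepsilon_g^{i+1}$ contributions are collected from $\langle\eta_x^{i+1},x^{i+1}-x\rangle$ exactly as the $\varepsilon_h^{i+1}$ ones. Dividing by $k+1$ gives~\eqref{Thm:ConvDFGPGD}.

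The main obstacle is the bookkeeping of the errors and boundary terms: once the M.1 perturbations are pushed through the optimality conditions and the three-point expansions, one must verify that all intermediate $r_x^i,r_z^i$ cancel in the telescoping so that only the $(k+1)$-st residuals survive, and with exactly the weights shown ($M_x$ for $r_x^{k+1}$ and the identity for $r_z^{k+1}$); and one must check that the $A^\top L$-weighted cross terms, including the $A^\top L B$ mixed term left over after the $u_2$-substitution, assemble precisely into the single dual inner-product on the left of~\eqref{Thm:ConvDFGPGD}, with every remaining quadratic being nonpositive and hence safely discarded. This is exactly where the DFGPGD choice of $M_x$ and the auxiliary variable $u_2^{k+1}$ earn their keep; the rest is the inexact-ADMM computation of~\cite{hamadouche2022probabilistic} specialised to this instance.
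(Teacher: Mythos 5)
Your proposal follows essentially the same route as the paper's own proof: recast the $x$- and $z$-updates as ($\varepsilon$-)subgradient optimality conditions, introduce the auxiliary sequences $u_2^{k+1}=v^{k+1}+B(z^k-z^{k+1})$ and $u_1^{k+1}=A(x^k-x^{k+1})+u_2^{k+1}$ to reassemble the coupling terms into the single dual inner product, apply the three-point identity with weights $\Lambda_{1}$ and $\Lambda_{2}$, telescope, inject the M.1 perturbations with $r_x^0=r_z^0=0$, drop the nonpositive increments, and divide by $k+1$. The bookkeeping concerns you flag (which residuals survive the telescoping and with which weights) are precisely the steps the paper carries out explicitly, so the proposal is correct and matches the paper's argument.
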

\begin{theorem}
\label{Theorem1ADMM}
In particular, for a solution $(x,z) = (x^\star,z^\star)$ of problem~\eqref{Eq:ProblemDFGPGD}, we have
\begin{align}
\label{Thm:ConvDFGPGD}
    &\frac{1}{k+1}\sum_{i=0}^{k}f(x^{i+1},z^{i+1})-f(x^\star,z^\star)+\sum_{i=0}^{k}\langle\frac{1}{\lambda}L  u^{i+1},Ax^{i+1}\notag\\&+Bz^{i+1}-(Ax^\star+Bz^\star)\rangle\leq\frac{1}{2(k+1)}\bigg[\lambda_x\|x^0-x^\star\|^2_{}\notag\\&+\|x^{k+1}-x^\star\|^2_{H^\top H}+\frac{1}{\lambda}\|x^{k+1}-x^\star\|^2_{A^\top L A }+\|z^0-z^\star\|^2\bigg]\notag\\
    &+\frac{1}{k+1}\bigg[\sum_{i=0}^{k}\varepsilon_g^{i+1}+\sum_{i=0}^{k}\varepsilon_h^{i+1}
    -\langle  M_x r_x^{k+1},x^{k+1}-x^\star \rangle
    \notag\\&-\langle r_z^{k+1},z^{k+1}-z^\star \rangle\bigg].
\end{align}
\end{theorem}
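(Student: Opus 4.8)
The plan is to obtain Theorem~\ref{Theorem1ADMM} as an immediate specialization of the preceding Lemma. The Lemma's bound holds for \emph{every} pair $(x,z)$ satisfying the linear constraint $Ax+Bz=c$, so the only thing to verify is that a minimizer $(x^\star,z^\star)$ of problem~\eqref{Eq:ProblemDFGPGD} is admissible in that sense. Since $(x^\star,z^\star)$ solves a \emph{constrained} problem, it is in particular feasible, i.e.\ $Ax^\star+Bz^\star=c$; hence $(x^\star,z^\star)$ lies in the set over which the Lemma's inequality is valid, and no further hypotheses are needed beyond \textbf{P.1}--\textbf{P.3}, \textbf{M.1} and the step-size condition $\lambda_x>\|\mathbf{H}_g\|_2$, which are assumed throughout.

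First I would invoke the Lemma with $x=x^\star$ and $z=z^\star$. This substitution replaces $f(x,z)$ by $f(x^\star,z^\star)$ on the left-hand side, the terms $\|x^0-x\|^2$, $\|x^{k+1}-x\|^2_{H^\top H}$, $\|x^{k+1}-x\|^2_{A^\top L A}$ and $\|z^0-z\|^2$ by the corresponding expressions with $x^\star,z^\star$ on the right-hand side, and likewise the inner products $\langle M_x r_x^{k+1},x^{k+1}-x\rangle$ and $\langle r_z^{k+1},z^{k+1}-z\rangle$ by their $x^\star,z^\star$ counterparts. In the constraint-residual term I would additionally use feasibility to write $Ax^{i+1}+Bz^{i+1}-(Ax^\star+Bz^\star)=Ax^{i+1}+Bz^{i+1}-c$, which is exactly the primal residual driving the $u$- and $v$-updates of~\eqref{DFGPGD1}; this makes the left-hand side a genuine running average of the suboptimality gap plus a weighted constraint-violation penalty. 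Collecting these substitutions reproduces~\eqref{Thm:ConvDFGPGD} verbatim.

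Since the statement is a direct corollary of the Lemma, there is essentially no analytical obstacle: all the work has already been carried out. The only point deserving a sentence of care is that the specialization is meaningful only when a solution $(x^\star,z^\star)$ exists, which one can guarantee under standard conditions (e.g.\ coercivity or compactness together with \textbf{P.1}--\textbf{P.2}). If desired, one can append a remark that, because $(x^\star,z^\star)$ is optimal and the inner-product term on the left collects the exact-penalty contribution of the constraint, the inequality yields an ergodic $O(1/k)$-type decay of $\frac{1}{k+1}\sum_{i=0}^{k}\big(f(x^{i+1},z^{i+1})-f(x^\star,z^\star)\big)$ once the accumulated proximal errors $\sum_{i=0}^{k}\varepsilon_g^{i+1}$, $\sum_{i=0}^{k}\varepsilon_h^{i+1}$ and the residual inner products are controlled — which is precisely what the subsequent error analysis and absolute-stability argument are designed to do.
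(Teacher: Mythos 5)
Your proposal is correct and matches the paper's intended logic: Theorem~\ref{Theorem1ADMM} is introduced with ``In particular'' precisely as the specialization of the preceding Lemma to $(x,z)=(x^\star,z^\star)$, and the only substantive point---that an optimal pair of the constrained problem is feasible, so $Ax^\star+Bz^\star=c$ and the Lemma applies---is the one you verify. The only cosmetic difference is that the paper places the full $\varepsilon$-subgradient/telescoping derivation of the Lemma inside the Theorem's proof environment rather than proving the Lemma separately, so its ``proof of the Theorem'' is really the proof of the general inequality, with the substitution you describe left implicit.
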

\begin{proof} 
By applying Lemma~\ref{lemma:Prox2SubGrad} in the appendix to \eqref{DFGPGD1:x-update} and \eqref{DFGPGD1:z-update}, those updates can be rewritten as
\begin{align}
    &\gamma_{1_k}-\Lambda_{1_k}x^{k+1} \in { \partial g}(x^{k+1})\\
    &\gamma_{2_k}-\Lambda_{2_k}z^{k+1} \in  { \partial h}(z^{k+1})\\
    &v^{k+1} = v^k+(Ax^{k+1}+Bz^{k+1}-c).
\end{align}
Using $\varepsilon$-subgradients (cf. Definition~\ref{Def:e-Subgradient} in the appendix), the following inequalities hold for sequences generated by the approximate DFGPGD algorithm \eqref{DFGPGD1}; for any $x$ and $z$,
\begin{align}
\label{Ineq:e-subgrad_g_h}
    &g(x^{k+1})-g(x)\leq\langle\gamma_{1_k}-\Lambda_{1_k}x^{k+1},x^{k+1}-x\rangle+\varepsilon_g^{k+1}\\
    &h(z^{k+1})-h(z)\leq\langle\gamma_{2_k}-\Lambda_{2_k}z^{k+1},z^{k+1}-z\rangle +\varepsilon_h^{k+1}\\
    &v^{k+1} = v^k+(Ax^{k+1}+Bz^{k+1}-c).
\end{align}
Expanding $\gamma_{1_k}$ and $\gamma_{2_k}$ we obtain
\begin{align}
    &g(x^{k+1})-g(x)\leq\langle\Lambda_{1_k} x^k-\frac{1}{\lambda}A^\top L  (Ax^k+Bz^k-c+v^k)\notag\\&-\Lambda_{1_k}x^{k+1},x^{k+1}-x\rangle+\varepsilon_g^{k+1},\\
    &h(z^{k+1})-h(z)\leq\langle\Lambda_{2_k} z^k-\frac{1}{\lambda}B^\top L  (Ax^{k+1}+Bz^k-c+v^k)\notag\\&-\Lambda_{2_k}z^{k+1},z^{k+1}-z\rangle +\varepsilon_h^{k+1},\\
    &v^{k+1} = v^k+(Ax^{k+1}+Bz^{k+1}-c).
\end{align}

\begin{align}
\label{Ineq:e-subgrad_g_h}
    &g(x^{k+1})-g(x)\leq\langle \Lambda_{1_k} (x^k-x^{k+1}),x^{k+1}-x\rangle\notag\\&-\langle\frac{1}{\lambda}A^\top L  u_1^{k+1},x^{k+1}-x\rangle+\varepsilon_g^{k+1},\\
    &h(z^{k+1})-h(z)\leq\langle \Lambda_{2_k} (z^k-z^{k+1}),z^{k+1}-z\rangle\notag\\&-\langle\frac{1}{\lambda}B^\top L  u_2^{k+1},z^{k+1}-z\rangle+\varepsilon_h^{k+1},\\
    &v^{k+1} = v^k+(Ax^{k+1}+Bz^{k+1}-c),
\end{align}
where $u_2^{k+1} = v^{k+1}+B(z^k-z^{k+1})$ and $u_1^{k+1} = v^{k+1}+B(z^k-z^{k+1})+A(x^k-x^{k+1}) = A(x^k-x^{k+1})+u_2^{k+1}$ . Adding both sides of the above inequalities we obtain 
\begin{align}
\label{Ineq:e-subgrad_g_h}
    &f(x^{k+1},z^{k+1})-f(x,z)\leq\langle \Lambda_{1_k} (x^k-x^{k+1}),x^{k+1}-x\rangle\notag\\&+\langle \Lambda_{2_k} (z^k-z^{k+1}),z^{k+1}-z\rangle+\varepsilon_g^{k+1}+\varepsilon_h^{k+1}\notag\\&-\langle\frac{1}{\lambda}L  u_2^{k+1},Ax^{k+1}+Bz^{k+1}-(Ax+Bz)\rangle\notag\\&-\langle\frac{1}{\lambda}A^\top L  A(x^k-x^{k+1}),x^{k+1}-x\rangle.
\end{align}
Substituting $\Lambda_{1_k} = \lambda_xI - H^\top H$ and $\Lambda_{2_k}  = \lambda_z I$ yields
\begin{align}
\label{Ineq:e-subgrad_g_h0}
    &f(x^{k+1},z^{k+1})-f(x,z)\leq\notag\\&\langle (\lambda_xI - H^\top H) (x^k-x^{k+1}),x^{k+1}-x\rangle+\lambda_z\langle z^k-z^{k+1}\notag\\&,z^{k+1}-z\rangle-\langle\frac{1}{\lambda}L  u_2^{k+1},Ax^{k+1}+Bz^{k+1}-(Ax+Bz)\rangle\notag\\
    &+\varepsilon_g^{k+1}+\varepsilon_h^{k+1}-\langle\frac{1}{\lambda}A^\top L  A(x^k-x^{k+1}),x^{k+1}-x\rangle.
\end{align}
For exact iterates $\overline{x}^k$ and $\overline{z}^k$ we have 
\begin{align}
    &\langle M_x (\overline{x}^k-\overline{x}^{k+1}),\overline{x}^{k+1}-x\rangle = \frac{1}{2}\Big[\|\overline{x}^k-x\|_{ M_x }^2-\|\overline{x}^k-\overline{x}^{k+1}\|_{ M_x }^2\notag\\&-\|\overline{x}^{k+1}-x\|_{ M_x }^2\Big] \leq\frac{1}{2}\Big[\|\overline{x}^k-x\|_{ M_x }^2-\|\overline{x}^{k+1}-x\|_{ M_x }^2\Big],
\end{align}
and
\begin{align}
    &\langle \overline{z}^k-\overline{z}^{k+1},\overline{z}^{k+1}-z\rangle = \frac{1}{2}\Big[\|\overline{z}^k-z\|^2-\|\overline{z}^k-\overline{z}^{k+1}\|^2\notag\\&-\|\overline{z}^{k+1}-z\|^2\Big] \leq \frac{1}{2}\Big[\|\overline{z}^k-z\|^2-\|\overline{z}^{k+1}-z\|^2\Big].
\end{align}
Using the above inequalities in \eqref{Ineq:e-subgrad_g_h0} and adding error terms from inexact iterates according to error model \textbf{M.1}, i.e., $x^k=\overline{x}^k+r_x^k$ and $z^k=\overline{z}^k+r_z^k$, we obtain
\begin{align}
\label{Ineq:e-subgrad_g_h}
    &f(x^{k+1},z^{k+1})-f(x,z)+\langle\frac{1}{\lambda}L  u_2^{k+1},Ax^{k+1}+Bz^{k+1}
    \\\notag&-(Ax+Bz)\rangle\leq\frac{\lambda_x}{2}\|x^k-x\|^2_{}-\frac{\lambda_x}{2}\|x^{k+1}-x\|^2_{}
    \\\notag&-\frac{1}{2}\|x^k-x\|^2_{ H^\top H }+\frac{1}{2}\|x^{k+1}-x\|^2_{ H^\top H }-\frac{1}{2\lambda}\|x^k-x\|^2_{A^\top L A }
    \\\notag&+\frac{1}{2\lambda}\|x^{k+1}-x\|^2_{A^\top L A }+\frac{1}{2}\|z^k-z\|^2-\frac{1}{2}\|z^{k+1}-z\|^2+\varepsilon_g^{k+1}
    \\\notag&+\varepsilon_h^{k+1}+\langle M_x  r_x^k,x^k-x \rangle+\langle r_z^k,z^k-z \rangle
    \\\notag&
    -\langle  M_x r_x^{k+1},x^{k+1}-x \rangle-\langle r_z^{k+1},z^{k+1}-z \rangle+\frac{1}{2}\big(\| M_x  r_x^{k}\|^2\\\notag&+\|r_z^{k}\|^2-\| M_x  r_x^{k+1}\|^2-\|r_z^{k+1}\|^2\big).
\end{align}
where $M_x = \lambda_xI - H^\top H - \frac{1}{\lambda}A^\top L A$. Summing both sides of the inequality from $0$ to $k$ yields
\begin{align}
\label{Ineq:e-subgrad_g_h}
    &\sum_{i=0}^{k}f(x^{i+1},z^{i+1})-(k+1)f(x,z)+\sum_{i=0}^{k}\langle\frac{1}{\lambda}L  u_2^{i+1},Ax^{i+1}\notag\\&+Bz^{i+1}-(Ax+Bz)\rangle\leq\notag
    \frac{\lambda_x}{2}\|x^0-x\|^2_{}-\frac{\lambda_x}{2}\|x^{k+1}-x\|^2_{}\notag\\&-\frac{1}{2}\|x^0-x\|^2_{H^\top H}+\frac{1}{2}\|x^{k+1}-x\|^2_{H^\top H}
    -\frac{1}{2\lambda}\|x^0-x\|^2_{A^\top L A }\notag\\&+\frac{1}{2\lambda}\|x^{k+1}-x\|^2_{A^\top L A }+\frac{1}{2}\|z^0-z\|^2-\frac{1}{2}\|z^{k+1}-z\|^2\notag\\&+\sum_{i=0}^{k}\varepsilon_g^{i+1}+\sum_{i=0}^{k}\varepsilon_h^{i+1}-\langle r_z^{k+1},z^{k+1}-z \rangle\notag\\
    &+\frac{1}{2}\big(\| M_x  r_x^{0}\|^2+\|r_z^{0}\|^2-\| M_x  r_x^{k+1}\|^2-\|r_z^{k+1}\|^2\big)
    \notag\\&+\langle M_x  r_x^0,x^0-x \rangle+\langle r_z^0,z^0-z \rangle-\langle  M_x r_x^{k+1},x^{k+1}-x \rangle.
\end{align}
Using the initial conditions of error model \textbf{M.1}; i.e., $r_x^0 = r_z^0 = 0$, we obtain 
\begin{align}
\label{Ineq:e-subgrad_g_h}
    &\sum_{i=0}^{k}f(x^{i+1},z^{i+1})-(k+1)f(x,z)\notag\\&+\sum_{i=0}^{k}\langle\frac{1}{\lambda}L  u_2^{i+1},Ax^{i+1}+Bz^{i+1}-(Ax+Bz)\rangle\leq\notag
    \\
    &\frac{\lambda_x}{2}\|x^0-x\|^2_{}-\frac{\lambda_x}{2}\|x^{k+1}-x\|^2_{}-\frac{1}{2}\|x^0-x\|^2_{H^\top H}\notag\\&+\frac{1}{2}\|x^{k+1}-x\|^2_{H^\top H}-\frac{1}{2\lambda}\|x^0-x\|^2_{A^\top L A }\notag\\&+\frac{1}{2\lambda}\|x^{k+1}-x\|^2_{A^\top L A }+\frac{1}{2}\|z^0-z\|^2\notag\\&-\frac{1}{2}\|z^{k+1}-z\|^2+\sum_{i=0}^{k}\varepsilon_g^{i+1}+\sum_{i=0}^{k}\varepsilon_h^{i+1}\notag\\&-\| M_x  r_x^{k+1}\|^2-\|r_z^{k+1}\|^2\big)
    -\langle  M_x r_x^{k+1},x^{k+1}-x \rangle
    \notag\\&-\langle r_z^{k+1},z^{k+1}-z \rangle.
\end{align}
Dividing both sides by $k+1$ completes the proof.
\begin{align}
\label{Ineq:e-subgrad_g_h}
    &\frac{1}{k+1}\sum_{i=0}^{k}f(x^{i+1},z^{i+1})-f(x,z)
    \notag\\&+\sum_{i=0}^{k}\langle\frac{1}{\lambda}L  u_2^{i+1},Ax^{i+1}+Bz^{i+1}-(Ax+Bz)\rangle\leq\notag
    \\
    &\frac{1}{2(k+1)}\bigg[\lambda_x\|x^0-x\|^2_{}+\|x^{k+1}-x\|^2_{H^\top H}
    \notag\\&+\frac{1}{\lambda}\|x^{k+1}-x\|^2_{A^\top L A }+\|z^0-z\|^2\bigg]+\frac{1}{k+1}\bigg[\sum_{i=0}^{k}\varepsilon_g^{i+1}\notag\\
    &+\sum_{i=0}^{k}\varepsilon_h^{i+1}-\langle  M_x r_x^{k+1},x^{k+1}-x \rangle-\langle r_z^{k+1},z^{k+1}-z \rangle\bigg].
\end{align}
where we have dropped negative terms from the right hand side.
\end{proof}
\subsection{Dynamical system approach}
In order to prove the convergence of DFGPGD, let us derive its dynamical state space description. Recall that for a generalized proximal WLM-ADMM, the state space description is given by the following differential inclusion

\begin{align}
&\begin{bmatrix} \Lambda_{1_k} & 0 & 0 \\  0 & \Lambda_{2_k} & 0 \\ 0 & 0 & I \end{bmatrix}
   \begin{bmatrix}\dot{x}(t)\\ \dot{z}(t)\\ \dot{y}(t) \end{bmatrix} \in \begin{bmatrix}0 & 0 & -A^\top L_k\\ 0 & 0 & -B^\top L_k\\  L_kA & L_kB& 0\end{bmatrix}  \begin{bmatrix}x(t)\\z(t)\\ y(t)\end{bmatrix}\notag\\& - \begin{bmatrix} \partial g(x(t)) \\\partial h(z(t)) \\ c \end{bmatrix}.
\end{align}
where the feedback nonlinearity is given by $\Phi = \begin{bmatrix} \partial g \\\partial h \\ 0 \end{bmatrix}$.

For DFGPGD, substituting $\Lambda_{1_k} = \lambda_x I - \mathbf{H}_g$ and fixing $L_k = L$, we obtain the following
\begin{align}
&\begin{bmatrix} \lambda_x I - \mathbf{H}_g & 0 & 0 \\  0 & \Lambda_{2_k} & 0 \\ 0 & 0 & I \end{bmatrix}
   \begin{bmatrix}\dot{x}(t)\\ \dot{z}(t)\\ \dot{y}(t) \end{bmatrix} \in \notag\\& \begin{bmatrix}0 & 0 & -A^\top L\\ 0 & 0 & -B^\top L\\  LA & LB& 0\end{bmatrix}  \begin{bmatrix}x(t)\\z(t)\\ y(t)\end{bmatrix} - \begin{bmatrix} \partial g(x(t)) \\\partial h(z(t)) \\ c \end{bmatrix}.
   \label{Eq:DFGPGDLureDescriptor}
\end{align}

In order for $\Phi$ to satisfy the sector condition we require $g,h : \mathbb{R}^n \rightarrow \mathbb{R} \cup \{+\infty\}$ to be convex lower semi-continuous, so that $\partial g$ and $\partial h$ are maximally monotone multivalued mappings.

Define 
\begin{align}
    &E = \begin{bmatrix} \lambda_x I - \mathbf{H}_g & 0 & 0 \\  0 & \Lambda_{2_k} & 0 \\ 0 & 0 & I \end{bmatrix}, \Gamma = \begin{bmatrix}0 & 0 & -A^\top L\\ 0 & 0 & -B^\top L\\  LA & LB& 0\end{bmatrix},\notag\\& \Sigma = I
\end{align}

Assume the following:
\begin{enumerate}[label=\textbf{A.5.\arabic*}]
\item The open-loop linear system $(E,\Gamma,\Sigma)$ (i.e., without feedback nonlinearity) is minimal, i.e. controllable and observable 
\begin{equation}
 \operatorname{rank}\begin{bmatrix}E^{-1}\Sigma & E^{-1}\Gamma E^{-1}\Sigma & \dots & (E^{-1}\Gamma)^{n-1}E^{-1}\Sigma\end{bmatrix} = n,
\end{equation}
and
\begin{equation}
 \operatorname{rank}\begin{bmatrix}\Omega \\ \Omega E^{-1}\Gamma \\ \vdots \\ \Omega (E^{-1}\Gamma)^{n-1}\end{bmatrix} = n.
\end{equation}
\end{enumerate}

Then for the transfer function matrix $G(s) = \Omega(sE - \Gamma)^{-1}\Sigma$ to be strictly positive real (SPR) we need to solve the following algebraic problem. According to the generalized Kalman-Yakubovich-Popov (KYP) Lemma, we need to find positive definite matrices $P = P^\top$ and $Q = Q^\top$ such that 
\begin{equation}    
\begin{split}
&P E^{-1}\Gamma + \Gamma^\top (E^{-1})^\top P = -Q \\ 
&\Sigma^\top (E^{-1})^\top P = \Omega.
\end{split}
\end{equation}
Once we obtain $P$ and $Q$, then for absolute numerical stability of DFGPGD we need to estimate the PD matrices $\underline{K}$ and $\overline{K}$ such that:
\begin{itemize}
    \item $E^{-1}\Phi$ satisfies the sector condition, i.e., $
    (E^{-1}\Phi(\zeta)-\underline{K}\zeta)^\top(E^{-1}\Phi(\zeta)-\overline{K}\zeta) \leq  0, \forall \zeta \in \mathbb{R}^{3n}$; and
    \item $\bigg(I + \overline{K}G(s)\bigg)\bigg(I + \underline{K}G(s)\bigg)^{-1}$ is strictly positive real.
\end{itemize}

\begin{theorem}
Consider the candidate Lyapunov function $V (\zeta) = \frac{1}{2} (\zeta-\zeta^\star)^\top P (\zeta-\zeta^\star)$ and assume the following:
\begin{enumerate}[label=\textbf{A.2.\arabic*}]
\item The linear system (without feedback nonlinearity) is assumed to be minimal, i.e. controllable and observable which means that
\begin{equation}
 \operatorname{rank}\begin{bmatrix}E^{-1}\Sigma & E^{-1}\Gamma E^{-1}\Sigma & \dots & (E^{-1}\Gamma)^{n-1}E^{-1}\Sigma\end{bmatrix} = n,
\end{equation}
and
\begin{equation}
 \operatorname{rank}\begin{bmatrix}\Omega \\ \Omega E^{-1}\Gamma \\ \vdots \\ \Omega (E^{-1}\Gamma)^{n-1}\end{bmatrix} = n.
\end{equation}
\item $G(s) = \Omega(sE - \Gamma)^{-1}\Sigma$, with $(E,\Gamma, \Sigma, \Omega)$ a minimal representation, is a SPR transfer matrix. Or according to the generalized Kalman-Yakubovich-Popov (KYP) Lemma, there exists positive definite matrices $P = P^\top$ and $Q = Q^\top$ such that $P E^{-1}\Gamma + \Gamma^\top (E^{-1})^\top P = -Q$ and $\Sigma^\top (E^{-1})^\top P = \Omega$.

\item  $E^{-1}\Sigma$ is full column rank, equivalently $\mathcal{K}er(E^{-1}\Sigma)$ = \{0\}. 

\item  $g : \mathbb{R}^n \rightarrow \mathbb{R} \cup \{+\infty\}$ is convex lower semi-continuous, so that $\partial g$ is a maximally monotone multivalued mapping.

\item  $h : \mathbb{R}^n \rightarrow \mathbb{R} \cup \{+\infty\}$ is convex lower semi-continuous, so that $\partial h$ is a maximally monotone multivalued mapping.

\item $\Phi$ satisfies the sector condition, i.e., there are PD matrices $\underline{K}$ and $\overline{K}$ such that: $
(E^{-1}\Phi(\zeta)-\underline{K}\zeta)^\top(E^{-1}\Phi(\zeta)-\overline{K}\zeta) \leq  0 \quad, \forall \zeta \in \mathbb{R}^{3n}$

\item $\bigg(I + \overline{K}G(s)\bigg)\bigg(I + \underline{K}G(s)\bigg)^{-1}$ is strictly positive real.

\item $\Omega E \zeta(0) \in \operatorname{dom} \partial \Phi$, 
\end{enumerate}

then the DFGPGD algorithm is numerically absolutely exponentially stable and $\zeta(t)$ satisfies
\begin{equation}
\frac{1}{2}\|\zeta-\zeta^\star\|_2^2 \leq \frac{1}{\lambda_{\min}(P)}V(0) \exp{\bigg(-2\frac{\lambda_{\min}(Q)}{\lambda_{\min}(P)}t\bigg)}.
\end{equation}
\end{theorem}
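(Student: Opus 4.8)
The plan is to run the classical absolute-stability (circle-criterion) argument for Lur'e systems, adapted to the descriptor inclusion~\eqref{Eq:DFGPGDLureDescriptor} with the quadratic storage function $V$. First I would fix a stationary point $\zeta^\star=(x^\star,z^\star,y^\star)$: there is $\phi^\star\in\Phi(\zeta^\star)$ with $\Gamma\zeta^\star-\phi^\star-[0;0;c]=0$, which is exactly the KKT system of~\eqref{Eq:ProblemDFGPGD} and exists under \textbf{P.1}--\textbf{P.3}. Subtracting this from the dynamics cancels the constant term $c$, so the error $\tilde\zeta:=\zeta-\zeta^\star$ satisfies $E\dot{\tilde\zeta}=\Gamma\tilde\zeta-\phi$ for a measurable selection $\phi(t)\in\Phi(\zeta(t))-\phi^\star$. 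Here I would invoke \textbf{A.2.4}, \textbf{A.2.5} and \textbf{A.2.8}, together with the fact that $E$ is symmetric positive definite (since $\lambda_x>\|\mathbf{H}_g\|_2$ and $\Lambda_{2_k}\succ0$), to appeal to the Br\'ezis/Kato existence theory for differential inclusions governed by maximally monotone operators: this gives a unique absolutely continuous trajectory, a.e.\ differentiability of $t\mapsto V(\zeta(t))$, and a measurable subgradient selection, so that all subsequent computations are legitimate.

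Next I would differentiate $V$ along a solution: for a.e.\ $t$, $\dot V=\tilde\zeta^\top P\dot{\tilde\zeta}=\tilde\zeta^\top PE^{-1}\Gamma\tilde\zeta-\tilde\zeta^\top PE^{-1}\phi$. Symmetrising the quadratic term and using the first KYP identity from \textbf{A.2.2}, $PE^{-1}\Gamma+\Gamma^\top(E^{-1})^\top P=-Q$, gives $\tilde\zeta^\top PE^{-1}\Gamma\tilde\zeta=-\tfrac12\tilde\zeta^\top Q\tilde\zeta$, while the co-state identity $\Sigma^\top(E^{-1})^\top P=\Omega$ with $\Sigma=I$ gives $PE^{-1}=\Omega^\top$, hence $\tilde\zeta^\top PE^{-1}\phi=(\Omega\tilde\zeta)^\top\phi$. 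Therefore $\dot V=-\tfrac12\tilde\zeta^\top Q\tilde\zeta-(\Omega\tilde\zeta)^\top\phi$. Since $\Phi=[\partial g;\partial h;0]$ with $\partial g,\partial h$ maximally monotone (\textbf{A.2.4}, \textbf{A.2.5}) and $\Omega$ acting as the identity on the $(x,z)$ block, $(\Omega\tilde\zeta)^\top\phi\ge0$; more generally this nonnegativity is precisely what the sector condition \textbf{A.2.6} on $E^{-1}\Phi$, combined with strict positive realness \textbf{A.2.7} of the loop-transformed transfer matrix $(I+\overline K G)(I+\underline K G)^{-1}$, secures for \emph{every} admissible nonlinearity --- which is what makes the stability ``absolute''. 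In all cases $\dot V\le-\tfrac12\tilde\zeta^\top Q\tilde\zeta\le-\tfrac12\lambda_{\min}(Q)\|\tilde\zeta\|_2^2$, where positive definiteness of $Q$ and $P$ is ensured by the minimality \textbf{A.2.1} and full-column-rank \textbf{A.2.3} hypotheses feeding the generalized KYP lemma.

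Finally, sandwiching $\tfrac12\lambda_{\min}(P)\|\tilde\zeta\|_2^2\le V\le\tfrac12\lambda_{\max}(P)\|\tilde\zeta\|_2^2$ turns the last display into a scalar differential inequality $\dot V\le-\kappa V$ with $\kappa$ determined by $\lambda_{\min}(Q)$ and the spectrum of $P$ (up to the normalisation conventions in the statement); Gr\"onwall's inequality then gives $V(\zeta(t))\le V(0)\exp(-\kappa t)$, and the lower bound $\tfrac12\|\tilde\zeta(t)\|_2^2\le V(\zeta(t))/\lambda_{\min}(P)$ delivers the stated exponential estimate. I expect the main obstacle to be the well-posedness step in the first paragraph --- showing the descriptor inclusion admits a sufficiently regular solution so that $V$ is differentiable along it and a measurable subgradient selection exists --- since the remainder (symmetrisation, the two KYP substitutions, and Gr\"onwall) is routine once that is in place; a secondary delicate point is the exact role of the loop transformation by $\underline K,\overline K$ when $\underline K\neq0$, which strictly speaking requires building the Lyapunov pair $(P,Q)$ from the KYP solution of the transformed system rather than of $G$ directly.
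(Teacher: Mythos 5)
The paper itself contains no proof of this theorem --- the statement is followed immediately by the experimental section --- so there is nothing to compare your argument against; what follows is an assessment of the proposal on its own terms. Your overall strategy (subtract a stationary point, differentiate the quadratic storage function, use the KYP identities to expose a $-\tfrac12\tilde\zeta^\top Q\tilde\zeta$ term plus a supply term controlled by the nonlinearity, then apply Gr\"onwall) is the standard Lur'e/circle-criterion route and is clearly what the surrounding scaffolding of the paper intends.

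There is, however, a genuine gap, and it sits exactly at the point you defer as a ``secondary delicate point.'' Your second paragraph obtains $(\Omega\tilde\zeta)^\top\phi\ge 0$ from plain monotonicity of $\partial g,\partial h$ by asserting that $\Omega$ acts as the identity on the $(x,z)$ block. But $\Omega$ is not free: the KYP constraint $\Sigma^\top(E^{-1})^\top P=\Omega$ with $\Sigma=I$ forces $\Omega=E^{-1}P$, so $\Omega$ is the identity on that block only if $P=E$ there. With $P=E$ the Lyapunov identity degenerates: $PE^{-1}\Gamma+\Gamma^\top(E^{-1})^\top P=\Gamma+\Gamma^\top=0$ whenever $L$ is symmetric, since $\Gamma$ is then skew-symmetric, so $Q=0$ and the exponential decay evaporates. (Indeed, because $E^{-1/2}\Gamma E^{-1/2}$ is skew-symmetric, $E^{-1}\Gamma$ has purely imaginary spectrum, and the Lyapunov equation admits no solution with both $P\succ0$ and $Q\succ0$; the strict decrease cannot come from the linear part at all.) The only way to close the argument is the loop transformation you mention only in passing: apply the KYP lemma to the transformed transfer matrix $(I+\overline K G)(I+\underline K G)^{-1}$ of assumption A.2.7, take $(P,Q)$ from that transformed realization, and then bound the cross term in $\dot V$ using $-(E^{-1}\phi-\underline K\tilde\zeta)^\top(E^{-1}\phi-\overline K\tilde\zeta)\ge0$ from the sector condition A.2.6, with the strict margin supplied by $\underline K\succ0$. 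That computation is the substance of the theorem and is absent from the proposal. Separately, your Gr\"onwall step yields the decay rate $\lambda_{\min}(Q)/\lambda_{\max}(P)$, not the $2\lambda_{\min}(Q)/\lambda_{\min}(P)$ appearing in the statement; the stated constant does not follow from this argument, so either the exponent in the theorem needs correcting or an additional inequality relating $Q$ and $P$ must be assumed. By contrast, the well-posedness concern you flag as the main obstacle is legitimate but standard (maximal monotonicity of $E^{-1}\Phi$ in the $E$-weighted inner product gives existence, uniqueness, and a.e.\ differentiability by the Br\'ezis theory), and is not where the real difficulty lies.
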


\section{Experimental results}
\label{Section:Experimental results}
We have implemented ADMM and DFGPGD using a C++ synthesizable linear algebra library (SXLAL\footnote{https://github.com/wincle626/SXLAL}) and Fixed-Point arithmetic from Xilinx' arbitrary precision library \footnote{https://docs.xilinx.com/r/en-US/ug1399-vitis-hls/Arbitrary-Precision-Data-Types-Library}. We set the wordlength (W) to $24$ bits and we allocate $16$ bits for the integer part and $8$ bits to the fractional part. We run the Vivado HLS compiler to generate the hardware block (IP core) and obtain the timing and resource utilization reports, which are summarized in Table~\ref{table:1} and Table~\ref{table:2}, respectively. 

The timing report in Table~\ref{table:1} shows the hardware-based algorithms latency in terms of estimated clock cycles for a given target of $10ns$ clock period. The estimated and achieved clock period (CP) can be read from the post-synthesis and post-implementation reports. In our case, ADMM  achieves a clock period of $6.372ns$ and DFGPGD achieves $4.097ns$, which is $35.70\%$ lower than ADMM (or $55\%$ higher clock frequency). The timing constraints are met in both designs.

In this experiment, we used 5 iterations in both solvers for comparison, this is referred to as  trip count in Vivado HLS timing reports. The iteration latency is reduced from $1401$ cycles\footnote{Note that in order to estimate the latency for ADMM, we have used a more optimized ADMM implementation which makes use of pre-cached Cholesky inverse of $H^\top H$, otherwise the synthesis reports an unknown value.} in ADMM to only $1052$ cycles using DFGPGD, which is $23.39\%$ reduction in overall latency (i.e., from $7329$ cycles to $5615$ cycles as shown in Table~\ref{table:1} below).
\begin{table}[!h]
\centering
\caption{Timing report}
\label{table:1}
\makebox[0.5 \textwidth][c]{       
\resizebox{0.5 \textwidth}{!}{   
\begin{tabular}{||r|ccc||} 
 \cline{2-4}
 
 \multicolumn{1}{r|}{} & Latency (cycles) & CP post-synth. (ns) & CP post-impl. (ns) \\ 
 \hline
\textbf{ADMM} &	$7329$ &  $4.305$ & $6.372$ \\
\textbf{DFGPGD} &	$5615$ & $3.195$ & $4.097$ \\
\textbf{Diff.} & \textbf{-23.39\%}   & \textbf{-25.78\%} &	\textbf{-35.70\%} \\ 
 \hline
\end{tabular}
}
}
\end{table}

The  resource utilization report is summarized in Table~\ref{table:2} below. DSP48E stands for digital signal processing logic element, FF for Flip Flops and LUT are Look-Up-Table blocks in FPGA. From Table~\ref{table:2}, we can see that DFGPGD uses $44.6\%$ less Flip Flops and it greatly reduces the digital signal processing logic and LUT block utilizaiton by $37.50\%$ and $38.53\%$, respectively. 
\begin{table}[!h]
\centering
\caption{Resource utilization report from post-implementation}
\label{table:2}
\makebox[0.5 \textwidth][c]{       
\resizebox{0.5 \textwidth}{!}{   
\begin{tabular}{||r|cccc||} 
 \cline{2-5}
 
 \multicolumn{1}{r|}{}& CLB & DSP48E & FF & LUT  \\ 
 \hline
\textbf{ADMM} & 339 & 16 & 1577 & 1822 \\
\textbf{DFGPGD}& 204 & 10	& 879	& 1120 \\
\textbf{Diff.} &\textbf{-39.82\%}	&\textbf{-37.50\%}	&\textbf{-44.26\%} & \textbf{-38.53\%}\\ 
 \hline
\end{tabular}
}
}
\end{table}

\begin{table}[!h]
\centering
\caption{Power report from post-implementation}
\label{table:2}
\makebox[0.5 \textwidth][c]{       
\resizebox{0.5 \textwidth}{!}{   
\begin{tabular}{||r|cc||} 
 \cline{2-3}
 
 \multicolumn{1}{r|}{}& Total Power (W) & Dynamic Power (W) \\ 
 \hline
\textbf{ADMM}& 0.636 & 0.044 \\
\textbf{DFGPGD}& 0.611 & 0.019 \\
\textbf{Diff.} & \textbf{-3.93\%} & \textbf{-56.82\%}\\ 
 \hline
\end{tabular}
}
}
\end{table}

The combined reduction in resource utilization and latency results in a $56.82\%$ reduction in dynamic power which accounts for a $3.93\%$ decrease in total on-chip power as reported in Table~\ref{table:2}. 

The dynamic power dissipation per iteration is approximately $8.8\text{E-}3$ W for ADMM and $3.8\text{E-}3$ W for DFGPGD. However, ADMM compensates for this loss by requiring a less number of iterations for a given error tolerance. Let the relative error for a synthetic LASSO problem be given by: $(f-f^\star)/f \%$, the dynamic power dissipation as a function of relative error is shown below. We run 151 experiments with randomly generated data that automatically satisfy the following  reconstructibility condition
\begin{equation}
     m > 2\cdot s\cdot\log\bigg(\frac{n}{s}\bigg)+\frac{7}{5}\cdot s+1.
\end{equation}
We control the relative error in every experiment via the maximum number of iterations (from $\operatorname{MAX\_ITER}=10$ to $\operatorname{MAX\_ITER}=300$) of the main loop in both algorithms. 

\begin{figure}[!htb]
\includegraphics[width=9cm]{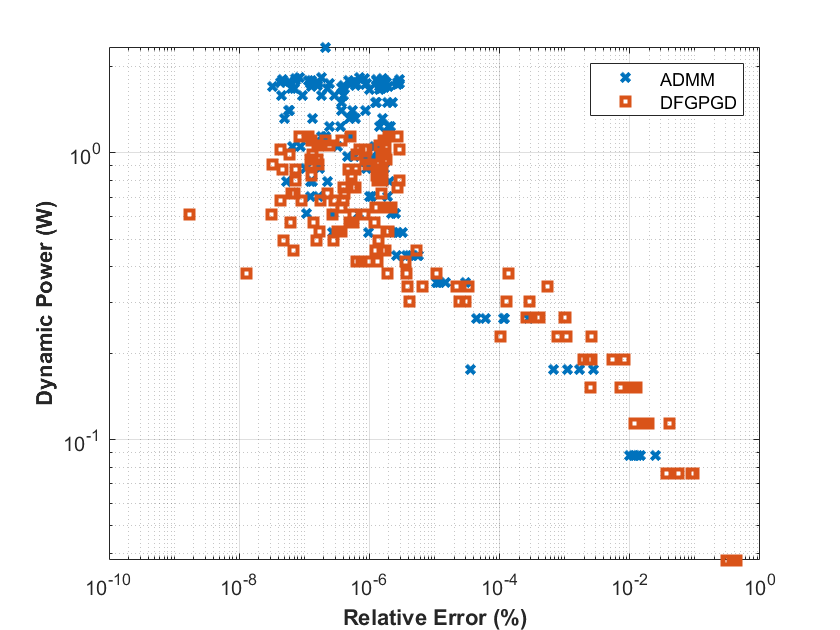}
\caption{Dynamic Power vs Relative Error ($|f-f^\star|/f^\star$) for fixed $n = 700$ and $m=270$ for DFGPGD and ADMM.}
\label{Figure5.1}
\end{figure}

From~\autoref{Figure5.1}, we can see that ADMM dissipates more dynamic power at high precision but quickly shifts to low power mode for a slight performance degradation. DFGPGD is more power-efficient but the power gain (efficiency) in terms of reduced accuracy (performance) is negligible as demonstrated by the slow transition to low power modes.




\section{Conclusions}
\label{Section:Conclusions}
In this work, we analysed the convergence of a new operator splitting algorithm which we called "Dual-Feedback Generalized Proximal Gradient Descent (DFGPGD)" and we established numerical stability guarantees in terms of sector conditions on the subdifferential of the objective function.  We implemented DFGPGD and ADMM on FPGA ZCU106 board and we compared them with respect to clock frequency, latency as well as resource utilization and power efficiency. We also performed a full-stack, application-to-hardware, comparison between approximate versions of DFGPGD and ADMM based on power/error rate trade-off.
The new generalized proximal ADMM (WLM-ADMM) can be used to instantiate new composite optimization solvers. As a future work, we will exploit the degrees of freedom of WLM-ADMM to design new custom, hardware-friendly algorithms that target more realistic applications. 
\addcontentsline{toc}{section}{Acknowledgment}

\appendix

\begin{lemma}
\label{lemma:Prox2SubGrad}
Let $x \in \mathbb{R}^n$. Then $x^{+} = \text{prox}_{q}(x)$ if, and only if, $(x-x^{+})\in \partial q(x^{+})$.
\end{lemma}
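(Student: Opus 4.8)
The plan is to recognise this as the standard subdifferential characterisation of the proximal map and to derive it from Fermat's optimality rule. Recall that, for a proper closed convex $q$, the proximal operator is defined by
\begin{equation}
\operatorname{prox}_q(x) = \operatorname*{arg\,min}_{u\in\mathbb{R}^n}\ \Big\{ q(u) + \tfrac{1}{2}\|u-x\|^2 \Big\},
\end{equation}
and the objective $\phi_x(u) := q(u) + \tfrac12\|u-x\|^2$ is strongly convex, so it admits a unique minimiser; thus $x^+ = \operatorname{prox}_q(x)$ is equivalent to the statement that $x^+$ minimises $\phi_x$.

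Next I would apply Fermat's rule for convex functions: $x^+$ minimises $\phi_x$ if and only if $0 \in \partial \phi_x(x^+)$. To evaluate $\partial\phi_x$, I would invoke the Moreau--Rockafellar sum rule. This is the one place where a qualification is needed, but it is immediate here: the quadratic term $u\mapsto\tfrac12\|u-x\|^2$ is finite and continuous (indeed differentiable) on all of $\mathbb{R}^n$, so $\partial\phi_x(u) = \partial q(u) + \nabla\big(\tfrac12\|u-x\|^2\big) = \partial q(u) + (u-x)$. Substituting $u = x^+$, the optimality condition $0\in\partial q(x^+) + (x^+-x)$ rearranges to $(x-x^+)\in\partial q(x^+)$, which is exactly the claimed equivalence; both implications are obtained by reading this chain of "if and only if" statements in the two directions.

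There is no real obstacle: the argument is entirely standard and the only technical care required is citing the sum rule under the (trivially satisfied) continuity qualification and noting existence/uniqueness of the minimiser from strong convexity. If one wished to avoid even the sum rule, an alternative is a direct two-sided argument: for the "only if" direction use that $\phi_x(u)\ge\phi_x(x^+)$ for all $u$ together with the convexity inequality for $q$ to extract $q(u)\ge q(x^+) + \langle x-x^+, u-x^+\rangle$; for the "if" direction, given $(x-x^+)\in\partial q(x^+)$, add $\tfrac12\|u-x\|^2-\tfrac12\|x^+-x\|^2 = \langle u-x^+, x^+-x\rangle + \tfrac12\|u-x^+\|^2$ to the subgradient inequality to conclude $\phi_x(u)\ge\phi_x(x^+)$ for all $u$, hence $x^+$ is the minimiser. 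I would present the sum-rule version as the main line and relegate the direct computation to a remark if space permits.
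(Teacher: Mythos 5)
Your proof is correct: the reduction to Fermat's rule, the use of the Moreau--Rockafellar sum rule (justified by the everywhere-continuity of the quadratic term), and the strong-convexity argument for existence and uniqueness of the minimiser are exactly the standard derivation, and the direct two-sided argument you sketch as a fallback is also sound. The paper itself states this lemma in the appendix without any proof, treating it as a known characterisation of the proximal operator, so your write-up supplies the missing argument in essentially the canonical way; the only point worth making explicit is the standing hypothesis that $q$ is proper, closed and convex, which the paper leaves implicit.
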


\begin{definition}[$\varepsilon$-subgradient]
\label{Def:e-Subgradient}
Let the $\varepsilon$-subgradient $w \in \partial_{\varepsilon} f(x)$, then $\forall y \in \mathcal{H}$ we have
\begin{equation}
    f(y) \geq f(x) + \langle w , y-x \rangle-\varepsilon.
\end{equation}
\end{definition}

\printbibliography
\end{document}